\documentclass[12pt]{amsart}
\usepackage{amsfonts}

\usepackage[letterpaper]{geometry}

\usepackage{amssymb}
\def\C{\mathbb{C}}

\def\F{{\mathbb F^*_p}}
\def\f{{\mathbb F_p}}

\newtheorem{thm}{\bf Theorem}[section]
\newtheorem{lemma}[thm]{\bf Lemma}
\newtheorem{prop}[thm]{\bf Proposition}

\theoremstyle{definition}

\begin{document}

\title{On the distribution of additive energy revisited}

 \author[Norbert Hegyv\'ari]{Norbert Hegyv\'ari}
 \address{Norbert Hegyv\'{a}ri, ELTE TTK,
E\"otv\"os University, Institute of Mathematics, H-1117
P\'{a}zm\'{a}ny st. 1/c, Budapest, Hungary and associated member of Alfr\'ed R\'enyi Institute of Mathematics, Hungarian Academy of Science, H-1364 Budapest, P.O.Box 127.}
 \email{hegyvari@renyi.hu}

\begin{abstract}
This paper extends the investigation of energy distribution in finite settings, which is related to the results established in [H]. We analyze the distribution of multiplicative energies using Fourier analytical methods and random structures. Our results provide new structural insights into energy phenomena in finite fields, complementing the earlier discrete analysis. Additionally, we provide an estimate for the smallest $k$ such that the $k$-fold product set $A^k$ covers the entire field $\mathbb{F}$, given that $A$ has small doubling.

MSC 2020 Primary 11B75, Secondary 11B30

Keywords: Additive Combinatorics, additive and multiplicative energy, Kim-Vu polynomial concentration
\end{abstract}

 \maketitle

\section{Introduction}

Additive and multiplicative energies are central notion in additive combinatorics, analyzing the additive structure of finite sets. The notion of energy is introduced by T.Tao, and nowadays many authors investigated this topic; we mention here in-depth examinations of energies by Shkredov in [S1], [S2], [S3], as well as related work by others in [BCK], [KT], [TV]. Some nice and elegant discrete-to-continuum bridging results can be found in [PT1], [PT2], [PT3].

In [H] the author investigated the range and distribution of additive energy values and  related inverse problems in a general finite setting for instance the distribution and "gaps" between possible values of additive energy for sets within the $n$-dimensional grid $[M]^n$. Specifically, it proves the existence of a long sequence of energy values where the ratio of consecutive elements $E(X_{i+1})/E(X_i)$ is bounded near 1. This suggests that in certain regimes, the spectrum of possible additive energies is quite dense. Furthermore it is proved that the sequence of energies cannot be too dense; there is an effectively computable constant $C$ (depending on $M$ and $n$) such that the ratio of any two consecutive energy values $E_{j+1}/E_j$ must remain above a certain threshold.

In this paper, we extend these investigations to the framework of finite fields, where the interaction between additive and multiplicative structures becomes particularly significant. Using Fourier-analytic methods together with techniques from random structures, we study the distribution of multiplicative energies. This approach yields
new structural  result to understand of energy
phenomena in finite fields and complement my earlier work (see [H]).

In the last section, we give an estimate for the smallest value of $k$ for which $A^k=\F$, assuming that $A$ has a small doubling.  

\section{Notations}

For a positive integer $N$ let $[N]:=\{1,2,\dots, N\}$. We use the Vinogradov notation $f\ll g$; it means that there is an absolute constant $C$ such that $|f|\leq C|g|$. $q\gg f$ means $f\ll g$ and write $f\asymp g$ if $f\ll g$ and $f\gg g$. $\f$ denotes the prime field with characteristic $p$, where $p$ is a prime number, $\F:=\f\setminus \{0\}$.

Let $e_p(x)=e^{2\pi xi/p}$ and for functions $f,g: \f\rightarrow \C.$
let $\widehat{f}(r)=\sum_{x\in \f}f(x)\cdot e_p(rx).$ Recall the classical Plancherel formula $\sum_{x\f}f(x)\overline{g(x)}=\frac{1}{p}\sum_{r\in \f}\widehat{f}(r)\overline{\widehat{g}(r)}$.
Denote by $A(x)$ the indicator function (i.e. $A(x)=1$ when $x\in A$ and $0$ otherwise.) In this special case $\sum_r|\widehat{A}(r)|^2=p\sum_x|A(x)|^2=p\cdot |A|.$
Let $A\subseteq \f$. 

The $k-$fold multiplicative energy of the set $A$ is 
$
E^\times_k(A)=|\{(a_1, a_2,\dots ,a_k,a'_1, a'_2,\dots ,a'_k): a_1\cdot a_2\cdots a_k=a'_1\cdot a'_2\cdots a'_k; \ a_i,a'_i \in A\}|
$
The additive energy is defined in a similar way.

\section{Results}

Let us note that for every $k$, i.e. $E^\times_k(A)/|A|^{2k}$ is a decreasing function of $k$ -- especial $E^\times_4(A)\leq E^\times_3(A)\cdot|A|^{2}$. 
Comparing energies $E^\times_k(A)$ and $E^\times_{k+1}(A)$ we have $a_1\cdot a_2\cdots a_k=a'_1\cdot a'_2\cdots a'_k$ iff $a_1\cdot a_2\cdots a_k\cdot a=a'_1\cdot a'_2\cdots a'_k\cdot a$ $a\in A$, so $$E^\times_{k+1}\geq |A|E^\times_k.$$

It is reasonable to ask; are there cases where 
\begin{equation}\label{A}
    \lim_{p\to \infty}\frac{E^\times_4(A)}{|A|E^\times_3(A)}=\infty.
\end{equation}
Define functions as $\eta (A)=\frac{\log |A|}{\log p}$ and $\mu (A)= \frac{\log (E^\times_3(A))}{\log |A|}$. In the following theorem we give conditions that (\ref{A}) holds. Let $\mathcal{H}_p=\{(x,y): \exists A\subseteq \f; \ x=\eta(A), y= \mu(A)\}$
\begin{thm}
Let  $U=\{(x,y): 1/3<x<3/8; \ 113/24<y<5\}\cap \mathcal{H}_p$. For every $(x,y)\in U$ there exists an  $A\subseteq \F$, such that 
$$
\lim_{p\to \infty}\frac{E^\times_4(A)}{|A|E^\times_3(A)}=\infty.
$$
\end{thm}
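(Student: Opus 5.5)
The plan is to express all three energies through multiplicative characters and then compare them with a single H\"older inequality, so that the hypothesis $y>113/24$ (in fact only $y>4$) drives the ratio to infinity. For a multiplicative character $\chi$ of $\F$, put $S(\chi)=\sum_{a\in A}\chi(a)$. Orthogonality of characters on the cyclic group $\F$ gives, for every $k\ge 1$,
$$
E^\times_k(A)=\frac{1}{p-1}\sum_{\chi}|S(\chi)|^{2k}.
$$
This is the multiplicative analogue of the Plancherel identity recalled in the Notations section. It is valid because $A\subseteq \F$, and this is why the theorem asks for $A\subseteq\F$ rather than $A\subseteq\f$.

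\textbf{Step 1: a H\"older/log-convexity inequality.} Write $|S|^6=|S|^2\cdot|S|^4$ and apply Cauchy--Schwarz over $\chi$:
$$
\sum_\chi |S(\chi)|^6\le\Big(\sum_\chi|S(\chi)|^4\Big)^{1/2}\Big(\sum_\chi|S(\chi)|^8\Big)^{1/2}.
$$
Dividing by $p-1$, this becomes $E^\times_3(A)^2\le E^\times_2(A)\,E^\times_4(A)$.

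\textbf{Step 2: trivial bound on $E^\times_2$.} In a solution of $a_1a_2=a_1'a_2'$ in $\F$, the choice of $a_1,a_2,a_1'$ determines $a_2'$. Hence $E^\times_2(A)\le |A|^3$.

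\textbf{Step 3: conclusion.} Combining the two steps gives
$$
\frac{E^\times_4(A)}{|A|E^\times_3(A)}\ \ge\ \frac{E^\times_3(A)}{|A|\,E^\times_2(A)}\ \ge\ \frac{E^\times_3(A)}{|A|^4}=|A|^{\mu(A)-4}.
$$
Suppose $(\eta(A),\mu(A))=(x,y)\in U$. Then $y-4>113/24-4=17/24$, so the ratio is at least $|A|^{17/24}=p^{17x/24}\ge p^{17/72}$, since $|A|=p^{x}$ and $x>1/3$. This tends to infinity as $p\to\infty$. Thus the constraint $x>1/3$ is used only to make $|A|$ grow with $p$, and the upper bounds $x<3/8$ and $y<5$ are not needed for this lower bound. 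The authors presumably reach the specific constant $113/24$ through a cruder Fourier estimate, for instance by isolating the trivial character $|A|^8/(p-1)$ and bounding the rest. The chain above is sharper and implies their statement.

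\textbf{Main obstacle: making the limit meaningful.} The point $(x,y)$ is fixed, while $\mathcal H_p$ depends on $p$, so we need a family $A=A_p\subseteq\F$ with $(\eta(A_p),\mu(A_p))$ equal to, or tending to, $(x,y)$ along a sequence of primes. If the theorem is read literally, membership in $\mathcal H_p$ provides such an $A_p$ for every $p$ in question, and Step 3 applies to each of them uniformly. If instead the points must be realized for all large $p$, I would first try an explicit family and tune two parameters:
\begin{enumerate}
\item take $A$ to be a union of a multiplicative subgroup (or a geometric progression) $H$ of size $p^{x'}$ with a random subset of $\F$ of density chosen to reach total size $p^x$;
\item the structured part makes $E^\times_3$ large (of order $|H|^5$), while the random part contributes roughly $|A|^6/p+|A|^3$;
\item use Kim--Vu polynomial concentration to show that $E^\times_3$ of the random set concentrates around its mean;
\item vary $x'$ and the density continuously to hit any prescribed $y\in(113/24,5)$ up to $o(1)$.
\end{enumerate}
Since Step 3 only needs $\mu(A_p)\ge 4+\varepsilon$, approximate realization is enough for the conclusion.
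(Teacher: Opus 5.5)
Your argument is correct, and it takes a genuinely different and much shorter route than the paper's.

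\textbf{The paper's route.} The paper applies additive characters of $\f$ to the Elekes--Garaev type equation $x_1x_2x_3y_1^{-1}y_2^{-1}+a=b$, with $a\in A$ and $b\in A+A$. This gives
$E^\times_3(A)\le \frac1p|A|^5|A+A|+\sqrt{p|A+A|E^\times_4(A)}$.
It then works in the regime where the square-root term dominates, writes $|A+A|=|A|^{1+\varrho}$, and needs $E^\times_3(A)/(p|A|^{2+\varrho})\to\infty$. The numbers $1/3$, $3/8$ and $113/24=4+\frac23+\frac1{24}$ come from this bookkeeping, using $|A|\ll p^{3/8}$ and $\varrho<1/24$. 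They do not come from isolating a trivial multiplicative character, as you guessed. A Kim--Vu concentration argument for random subsets of $[N]$ is then used to realize the pairs $(x,y)$.

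\textbf{Your route.} You replace the analytic part by log-convexity, $E^\times_3(A)^2\le E^\times_2(A)E^\times_4(A)\le |A|^3E^\times_4(A)$. This gives $E^\times_4(A)/(|A|E^\times_3(A))\ge |A|^{\mu(A)-4}$ for every $A\subseteq\F$, with no regime assumption.
\begin{itemize}
\item For $|A|\le\sqrt p$ your bound on $E^\times_3(A)$ is never weaker than the paper's, since $|A|^3\le p|A|\le p|A+A|$.
\item It needs only $\mu(A)\ge 4+\varepsilon$ and $|A|\to\infty$.
\item It covers all of $U$. The paper's criterion amounts to $x(y-2-\varrho)>1$, and this fails near the corner $(1/3,113/24)$ of $U$, where $x(y-2-\varrho)\le x(y-2)\approx 65/72<1$.
\end{itemize}

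\textbf{What each buys.} The paper's route yields a sum--product flavoured inequality tying multiplicative energies to $|A+A|$. It reuses that inequality in the concluding section for sets of small doubling. Your inequality carries no additive information.

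\textbf{Existence of $A$.} Your handling of this is fine under the literal reading $U\subseteq\mathcal H_p$. If the realizing set contains $0$, deleting it changes $E^\times_3$ by $O(|A|^4)=o(|A|^y)$, so nothing breaks. If you want explicit families, the paper's random construction, read through a primitive root $g$, is simpler than your subgroup-plus-random union. For $T\subseteq[N]$ with $3N<p-1$, the multiplicative energy of $\{g^t:t\in T\}$ is exactly the additive energy of $T$.
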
 
As a contrast, we prove that
\begin{prop} We have
\begin{equation}\label{1}
E^\times_{k+1}(A)/|A|^{2k+2}\leq E^\times_k(A)/|A|^{2k},
\end{equation}   
especial $E^\times_4(A)\leq |A|^2E^\times_3(A)$.
\end{prop}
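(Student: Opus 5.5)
The plan is to write $E^\times_k(A)$ in terms of the product-representation function and then use Cauchy--Schwarz once. For $x\in\F$ let
$$
r_k(x)=|\{(a_1,\dots,a_k)\in A^k:\ a_1\cdots a_k=x\}|,
$$
so that $E^\times_k(A)=\sum_x r_k(x)^2$ and $\sum_x r_k(x)=|A|^k$. We may assume $0\notin A$; if $0\in A$, the solutions containing a zero factor are handled separately, or we simply restrict to $A\subseteq\F$ as in the theorem. The basic recursion is the multiplicative convolution identity
$$
r_{k+1}(x)=\sum_{a\in A} r_k(x a^{-1}).
$$

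The key step is to apply Cauchy--Schwarz to this sum of $|A|$ terms:
$$
r_{k+1}(x)^2\le |A|\sum_{a\in A} r_k(xa^{-1})^2 .
$$
Summing over $x\in\F$ and substituting $y=xa^{-1}$, which is a bijection of $\F$ for each fixed $a$, gives
$$
E^\times_{k+1}(A)=\sum_x r_{k+1}(x)^2\le |A|\sum_{a\in A}\sum_{y} r_k(y)^2=|A|^2E^\times_k(A).
$$
Dividing by $|A|^{2k+2}$ yields (\ref{1}). The special case $k=3$ is $E^\times_4(A)\le |A|^2E^\times_3(A)$.

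The only potential obstacle is the bookkeeping when $0\in A$, because then $a^{-1}$ is undefined. In that case I would split $r_{k+1}(0)$ from the nonzero values. Alternatively, observe that the Cauchy--Schwarz step is valid for any group convolution, and work in the multiplicative monoid directly by counting solutions $(a,y)$ with $ay=x$. Each $y$ appears for at most one $a$ when $x\neq0$. The value $x=0$ needs a direct count, which is bounded in the same way using $r_{k+1}(0)\le |A|\,r_k(0)+|\{a=0\}|\,|A|^k$. Since the theorem concerns $A\subseteq\F$, I would state the proposition for $A\subseteq \F$ and avoid this case.
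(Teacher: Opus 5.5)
Your argument is correct and is essentially the paper's: both use the convolution $r_{k+1}(x)=\sum_{a\in A}r_k(xa^{-1})$ and a single Cauchy--Schwarz. The paper expands the square into the $|A|^2$ correlations $\sum_n r_k(n)r_k(na'a^{-1})$ and bounds each by $E^\times_k(A)$, while you apply Cauchy--Schwarz to the sum over $a$ directly; both tacitly need $0\notin A$. One correction to your aside: the case $0\in A$ cannot be ``bounded in the same way'', because the inequality is false there. For example, $A=\{0,1\}$ gives $E^\times_2(A)=10>8=|A|^2E^\times_1(A)$ and $E^\times_4(A)=226>200=|A|^2E^\times_3(A)$, so restricting to $A\subseteq\F$ is necessary, not merely convenient.
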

The equation (\ref{1}) perhaps is known, for sake of completeness  we give the short proof.
\begin{proof}
Let denote $q_k(n)=|\{(a_1,a_2,\dots, a_k): a_1a_2\cdots a_k=n\}| $. Then
$$
E^\times_{k+1}(A)=\sum_n q_{k+1}^2(n)=\sum_{a,a'\in A}\sum_nq_k(n)q_k(na'a^{-1})),
$$
since
$$
a_1\cdots a_k\cdot a=a'_1\cdots a_k\cdot a' \ \Leftrightarrow \  a_1\cdots a_k=a'_1\cdots a_k\cdot a'\cdot a^{-1}.
$$
Thus by the Cauchy inequality
$$
E^\times_{k+1}(A)= |A|^2 \sum_nq_k(n)q_k(na'a^{-1}))\leq |A|^2\sqrt{\sum_nq^2_k(n)}\sqrt{\sum_nq^2_k(na'a^{-1})}=
$$
$$
=|A|^2\sum_nq^2_k(n)=|A|^2E^\times_k(A).
$$
\end{proof}

\section{Proof of Theorem 2.1}

\begin{proof}
The starting equation is similar to Elekes-Garaev one (see [E], [G]).
Consider the equation $x_1x_2x_3y_1^{-1}y_2^{-1}+a=b$, $x_i,y_j\in A$, $i=1,2,3; \ j=1,2$, $a\in A$, $b\in A+A$. When $x_1x_2x_3y_1^{-1}y_2^{-1}\in A$ this equation holds. Rearranging equation  $x_1x_2x_3y_1^{-1}y_2^{-1}=a'$ one can have a sex-tuple for which $x_1x_2x_3=y_1y_2a'$. Hence, the number of solutions $S$ is at least $E^\times_3(A)|A|$. On the other hand
$$
S=\frac{1}{p}\sum_{r\in \f}\sum_{x_1,x_2,x_3,y_1,y_2}\sum_{a,b}e_p(r(x_1x_2x_3y_1^{-1}y_2^{-1}+a-b))=
$$
$$
=\frac{1}{p}|A|^6|A+A|+\frac{1}{p}\sum_{r\in \F}\sum_{x_1,x_2,x_3,y_1,y_2}\sum_{a,b}e_p(r(x_1x_2x_3y_1^{-1}y_2^{-1}+a-b))
$$
We give an estimation of the second sum $S'$ as:
$$
S'\leq \max_{r\neq 0}\Big|\sum_{x_1,x_2,x_3,y_1,y_2}e_p(r(x_1x_2x_3y_1^{-1}y_2^{-1}))\Big|\sum_r\Big|\sum_{a,b}e_p(r(a-b))\Big|:=\max_{r\neq 0}T\sum_r\Big|\sum_{a,b}e_p(r(a-b))\Big|.
$$
By the Cauchy inequality and the Parseval identity $\sum_r\Big|\sum_{a,b}e_p(r(a-b))\Big|\leq p\sqrt{|A||A+A|}$. Furthermore, again by Cauchy inequality
$$ 
T^2\leq |A|\sum_{x_1\in \f}\Big|\sum_{x_2,x_3,y_1,y_2}e_p(rx_1(x_2x_3y_1^{-1}y_2^{-1}))\Big|^2=|A|\sum_{x_1\in \f}\sum_{x_2,x_3,y_1,y_2}e_p(rx_1(x_2x_3y_1^{-1}y_2^{-1}-x'_2x'_3y_1'^{-1}y_2'^{-1})).
$$
The argument $x_2x_3y_1^{-1}y_2^{-1}-x'_2x'_3y_1'^{-1}y_2'^{-1}=0$ iff $x_2x_3y_1'y_2'=x'_2x'_3y_1y_2$. Counting all four-tuples we have $T\leq \sqrt{p|A|E^\times_4(A)}$. Summing up and using $S$ as at least $E^\times_3(A)|A|$ we get
\begin{equation}\label{2}
E^\times_3(A)\leq \frac{1}{p}|A|^5|A+A|+\sqrt{p|A+A|E^\times_4(A)}.
\end{equation}

Now, we estimate the region when in $(\ref{2})$ the second term dominates: 
$$
\frac{1}{p^2}|A|^{10}|A+A|\leq 4 pE^\times_4(A),
$$
furthermore $\frac{1}{p^2}|A|^{10}|A+A|\leq \frac{1}{p^2}|A|^{12}$ and since $|A|^4\ll E^\times_4(A)\ll |A|^7$, so we are in the region $|A|\ll p^{3/8}$. 

Write $|A+A|=|A|^{1+\varrho}$, then by $(\ref{2})$
$$
E^{\times 2}_3(A)\leq 4p|A+A|E^\times_4(A)=4p|A|^{1+\varrho}E^\times_4(A),
$$
so when
$\lim_{p\to \infty}\frac{E^{\times}_3(A)}{p|A|^{2+\varrho}}=\infty$ then our statement holds.


Now we prove that for every $(x,y)\in U$, there exists a set $A\subseteq \f$, such that $x=\eta(A); \ y=\mu(A)$. 

\medskip

We follow a random process. Let $0<\rho<1$ be a parameter that will be specialized later. Select an element $a\in [N]$ with probability $\rho$. 

The tool of the rest of the proof is the deep result of Kim and Vu (see details in [TV]). Let $H =(V, E)$ be a hypergraph where the size of the edges is at most $k$ (in our case, $k=6$). The vertex set is $[N]$ and the edge set is all sex tuples formed from vertex set. Assign an independent Bernoulli variable to each vertex ($v \in V$): $t_v = 1$ with probability $p$, otherwise $0$. Let $\underline{t}=(t_{x_1},t_{x_2},t_{x_3},t_{x_4},t_{x_5},t_{x_6})$ and let
$$F:=F(\underline{t})= \sum_{x_1+x_2+x_3=x_4+x_5+x_6} t_{x_1}t_{x_2}t_{x_3}t_{x_4}t_{x_5}t_{x_6}.$$
Let $\mathbf{e}=(\varepsilon_1,\varepsilon_2,\dots, \varepsilon_6)\in \{0,1\}^6$ and write $F_\mathbf{e}:=\Big(\frac{\partial{}}{\partial{t_{x_1}}}\Big)^{\varepsilon_1}\Big(\frac{\partial{}}{\partial{t_{x_2}}}\Big)^{\varepsilon_2}\dots\Big(\frac{\partial{}}{\partial{t_{x_6}}}\Big)^{\varepsilon_6}F(\underline{t})$ and write $E_i$ the expectation of $F_\mathbf{e}$, when $\sum_{1\leq j\leq 6}\varepsilon_j=i$.

Let $E^*$ be the maximum expected value for all sets $\mathbf{e}$:
$$E^* = \max_{\mathbf{e}\setminus (0,0,\dots,0)} E[F_\mathbf{e}],$$
i.e. $F_\mathbf{e}$ is a "partial" polynomial obtained by setting the variables  $t_{x_i}=\varepsilon_i$ and choosing the rest randomly. Write $E_0 = E[F]$ (this is the expected value of the entire polynomial). Now we need a special case of the Kim-Vu theorem (see in [TV, Theorem 1.36]):
\begin{lemma}
For every $\lambda>1$, there exist absolute constants $a$ and $b$ such that
$$P\left(|F-E_0|>a\sqrt{E_0E^*} \lambda^6 \right) \leq be^{-\lambda}.$$
(For instance in this lemma $a$ can be $8^66!^{1/2}$, $b=2e^2$).
\end{lemma}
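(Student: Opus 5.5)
The plan is to obtain the lemma as a direct specialization of the Kim--Vu polynomial concentration inequality in the form quoted from [TV, Theorem 1.36]. No new probabilistic argument is intended; the work is to check that $F$ and the quantities $E_0,E^*$ fit that theorem's hypotheses with $k=6$.

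First, I will record the setting. The variables $t_v$, $v\in[N]$, are independent Bernoulli (\emph{boolean}) random variables. $F(\underline t)=\sum t_{x_1}\cdots t_{x_6}$, the sum over sex-tuples with $x_1+x_2+x_3=x_4+x_5+x_6$, is a polynomial with nonnegative coefficients. Each monomial has degree at most $6$, and its degree drops when indices coincide, since $t_v^2=t_v$ for boolean variables. So $F$ is a positive polynomial of degree $k\le 6$ on the hypergraph $H$ with edges of size at most $6$, exactly as in the paragraph before the lemma. This is the class to which Kim--Vu applies.

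Next, I will match the expectation parameters. In [TV] one sets $\mathbf{E}_{\ge i}(F)=\max_{|\mathbf e|\ge i}\mathbb E[F_{\mathbf e}]$, where $F_{\mathbf e}$ is the partial derivative indexed by $\mathbf e$. Then $\mathbf E_{\ge 1}(F)$ is precisely the quantity $E^*$ defined above (the maximum over $\mathbf e\neq(0,\dots,0)$), and $\mathbf E_{\ge 0}(F)=\max(E_0,E^*)$. The theorem gives the deviation threshold $a_k\sqrt{\mathbf E_{\ge0}\mathbf E_{\ge1}}\,\lambda^k$ with tail $b_k e^{-\lambda}$, for $a_k=8^k\sqrt{k!}$ and $b_k=2e^2$. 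Taking $k=6$ gives the constants $a=8^6\,6!^{1/2}$ and $b=2e^2$ claimed in the lemma.

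The one point needing care is replacing $\mathbf E_{\ge0}$ by $E_0$. If $E_0\ge E^*$ this is an identity and the lemma is verbatim the cited statement. If $E^*>E_0$, I would argue as follows. Among the derivatives, $F$ contains all first-order partials, and the constant terms of the highest derivatives are $O(1)$. In the regime the paper uses, $\rho$ is not too small and $E_0\asymp\rho^6N^5$ dominates every $\mathbb E F_{\mathbf e}\ll \rho^{6-i}N^{5-i}$ for $i\ge1$. So I would state the lemma in that regime, and otherwise note that the bound with $\max(E_0,E^*)$ is what is actually being used.

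I expect this comparison of $E_0$ with $E^*$, and keeping the tail free of any $N$-dependence exactly as [TV, Theorem 1.36] is quoted, to be the main obstacle. I would handle it by checking the precise formulation in [TV] rather than re-deriving the martingale/induction-on-degree proof of Kim and Vu.
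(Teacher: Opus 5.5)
Your route is the paper's route: the paper's ``proof'' is only the citation of [TV, Theorem 1.36]. But the specialization does not give the lemma as stated, and the point you deferred is exactly where it breaks.

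The Kim--Vu inequality, both in the original paper and in [TV, Theorem 1.36], has tail $b_k e^{-\lambda}|V|^{k-1}$, not $b_k e^{-\lambda}$. The factor arises from union bounds over the vertices in the induction on the degree, and it is why one normally takes $\lambda\asymp\log|V|$ and accepts a $\log^k|V|$ loss. Here $|V|=N$ and $k=6$, so what the citation actually gives is
\[
P\bigl(|F-E_0|>8^6\sqrt{6!}\,\lambda^6\sqrt{\max(E_0,E^*)\,E^*}\bigr)\le 2e^2N^5e^{-\lambda}.
\]
No choice of absolute $a,b$ absorbs the $N^5$. So your sentence ``if $E_0\ge E^*$ \dots the lemma is verbatim the cited statement'' is false, and checking [TV] will confirm the factor rather than remove it. A tail uniform in $N$ would need a genuinely stronger concentration inequality, not a specialization of this one; the paper's citation has the same defect.

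The corrected form is enough downstream. Take $\lambda=6\log N$: the tail is $O(1/N)$ and the deviation is $O\bigl(E_0(\log N)^6/\sqrt{N\rho}\bigr)=o(E_0)$, because $N\rho$ is a positive power of $N$ in the application. So you should state the lemma with the factor $N^5$ in the tail, or equivalently with the $(\log N)^6$ loss built into the deviation.

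Your treatment of $E_0$ versus $\max(E_0,E^*)$ is the right fix, but the restriction is forced, not optional. With $E_0=\mathbb{E}F$ and no hypothesis, the statement is false for every pair of absolute constants $a,b$:
\begin{itemize}
\item Let $u=N\rho\to 0$. Then $E_0=u(1+O(u))$, dominated by the diagonal tuples $(v,\dots,v)$.
\item Meanwhile $1\le E^*=O(1)$, since each derivative with respect to a single $t_v$ has constant term $1$.
\item With probability $1-(1-\rho)^N\ge u/2$ some $t_v=1$; then $F\ge 1$, so $|F-E_0|\ge 1/2$.
\item Choose $\lambda\asymp u^{-1/12}$. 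The threshold $a\lambda^6\sqrt{E_0E^*}$ drops below $1/2$, while $be^{-\lambda}=o(u)$, so the claimed bound fails.
\end{itemize}
Hence the lemma must either use $\max(E_0,E^*)$ as Kim--Vu do, or assume $E_0\ge E^*$ (e.g.\ $N\rho\to\infty$, as in the later application).
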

Next, we give bounds on the expected values of the "partial" polynomials and the value of $E^*$.

The number of sex-tuples $(x_1,\dots, x_6)$ for which $x_1+x_2+x_3=x_4+x_5+x_6$ is at least $N^5/10$ (an exact calculation shows $(1+o(1)11N^5/20$ but for our calculation the constant is not important). So the expected value of the sex-tuples is $E_0=cN^5 \rho^6$.
Next, $\frac{\partial}{\partial t_{x_1}}F:= F_1$ means the following; fix an element $a_1 = x$ then $x+a_2+a_3=a_4+a_5+a_6$, again with $c_1N^4$ many solutions, and with probability $p^5$, so $E_i \asymp  \rho^5 N^4$.

For any $i \in \{1, \dots, 5\}$, the partial expectations scale as $E_i \asymp N^{5-i} \rho^{6-i}$.

Since we require the expected size of the set $|A| = N\rho$ to be large, and $\rho < 1$, the sequence of expectations $E_i$ is strictly decreasing. Consequently, the maximum partial expectation is $E^* = E_1 \asymp N^4 \rho^5$. According to Lemma 4.1, the deviation of $F$ from its mean is controlled by the factor:
$$ \sqrt{E_0 E^*} = \sqrt{(N^5 \rho^6)(N^4 \rho^5)} = \sqrt{\frac{E_0^2}{N\rho}} = \frac{E_0}{\sqrt{N\rho}} $$
As $N\rho \to \infty$, the relative error satisfies:
$$ \frac{|F - E_0|}{E_0} \ll \frac{1}{\sqrt{N\rho}} \to 0 $$
This ensures that for sufficiently large $N$, the energy $F$ is tightly concentrated around $E_0$. Thus, there exists a set $A \subseteq [N]$ with $|A| \asymp Np$ and $E_3(A) \asymp N^5 \rho^6$. 

To satisfy the energy exponent $\mu(A) = 4+c$ required by the theorem, we specialize the probability parameter by setting $\rho = N^{\frac{c-1}{2-c}}$. This choice yields:
$$ |A| \asymp N \cdot N^{\frac{c-1}{2-c}} = N^{\frac{1}{2-c}} $$
and
$$ E_3(A) \asymp N^5 (N^{\frac{c-1}{2-c}})^6 = |A|^{4+c} $$
Substituting these into the energy ratio condition from Theorem 3.1 confirms that for $(x,y) \in U$, the ratio $\frac{E_4^\times(A)}{|A|E_3^\times(A)}$ indeed tends to infinity as $ \to \infty$.

Now we check for which pairs $\eta(A), \mu(A)$, exist appropriate $A$.

Write $|A|=p^\alpha$. As we calculated $\alpha<3/8$. Furthermore, since the fraction below tends to infinity
$$
\frac{E^{\times}_3(A)}{p|A|^{2+\varrho}}=\frac{|A|^{2+c-\varrho}}{p}=\frac{p^{\alpha(2+c-\varrho)}}{p}
$$
we should have $\alpha(2+c-\varrho)>1$ or $1>c-\varrho>1/\alpha -2$, from which we get $\alpha >1/3$. Since $A+A\subseteq 2p^{3/8}$, hence $\varrho<1/24$.

Furthermore $\frac{3}{8}(2+c-\varrho)-1>\alpha(2+c-\varrho)-1>0$, from which we get $ c-\varrho>\frac{8}{3}-2=2/3$, so $c>2/3+\varrho>2/3+1/24=17/24$.

Finally, we have to check that in the random process, the parameter $N\leq p$ (by the modularity).

Since $|A|=p^\alpha=N^{1/(2-c)}$, hence $N=p^{\alpha (2-c)}\leq p^{3/4}<p.$

\end{proof}

\medskip

\section{Concluding remark}
1. In the rest of the paper we investigate when the first term will dominate in (\ref{2}). 

It should be noted that the second term in (\ref{2}) contains a "moving" (energy) term, which is why there will be a "gray zone" in the region. A simple calculation shows, estimating the energy as 
\begin{equation}\label{4}
E^\times_4(A)\leq|A|^7   
\end{equation}
and the estimation of the regime we assume the "worst" case; the equality in $(4)$ which gives $\nu(A)>3/4$. 

By Proposition 2.2 we conclude that $E_2^\times(A)\gg |A|^3$. Hence using the Balog-Szemer\'edi-Gowers theorem we have that there is a "big" part $A'$ of $A$ for which $A'+A'$ has small doubling (i.e. $|A'+A'|\leq K|A'|$ with some $K>0$). These motivate the following formulation of the next theorem:
\begin{thm}
Let $A\subseteq \F$ and let and  with $1\in A$. Assume that $|A+A|=K|A|$, $K>0$, and $|A|>\frac{p^{3/4}}{K^{1/4}}$. Then $|A^3|>\frac{p}{2K}$ and there is a subgroup $G<\F$ such that $A^{12K}=G$. In particular if $A$ is a generating subset of $\F$ then $A^{12K}=\F$.
\end{thm}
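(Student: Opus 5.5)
We will use inequality (\ref{2}) read the other way round: a lower bound on the number $S$ of solutions of an Elekes--Garaev type equation, coming from the Fourier analysis, gives a lower bound on a product set.

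\textbf{Step 1: $|A^3|>p/(2K)$.} We count solutions of
$$x_1x_2x_3\,y^{-1}+a=b,\qquad x_i,y\in A,\ a\in A,\ b\in A+A .$$
Write $S$ for the number of solutions.

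For the lower bound, note that whenever $x_1x_2x_3y^{-1}\in A$ we may take $b=x_1x_2x_3y^{-1}+a\in A+A$. In particular every choice with $x_1=y$ (using $1\in A$, or simply cancellation) gives a solution, so $S\geq |A|^3\cdot|A|=|A|^4$.

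For the upper bound, expand $S$ exactly as in the proof of Theorem 2.1. The $r=0$ term is $|A|^5|A+A|/p$. The remaining terms are bounded by
$$\max_{r\neq 0}\Big|\sum e_p(rx_1x_2x_3y^{-1})\Big|\cdot p\sqrt{|A||A+A|}.$$
For the inner exponential sum we use the bilinear bound: group $u=x_1x_2x_3$ and $v=y^{-1}$, so it is a bilinear form in $u\in A^3$ (with multiplicities) and $v\in A^{-1}$. This gives
$$\Big|\sum e_p(rx_1x_2x_3y^{-1})\Big|\leq \sqrt{p\,|A|\,E^\times_{3\text{-fold}}}.$$
Rather than this, it is cleaner to bound $S$ from above directly through $|A^3|$. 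For fixed $(z,a,b)$ with $z\in A^3/A$, the equation $z+a=b$ has at most $|A|$ solutions in $a$. Combining with Cauchy--Schwarz over the values $z=x_1x_2x_3y^{-1}$, the Fourier expansion yields
$$|A|^4\leq S\ll \frac{|A|^5|A+A|}{p}\cdot\frac{|A^3|}{|A|^3}\,+\,\sqrt{p\,|A^3|\,|A|^2\cdot|A||A+A|}.$$
Substituting $|A+A|=K|A|$ and $|A|^4>p^3/K$, the second term is too small unless $|A^3|>p/(2K)$. The tuning of constants, to land exactly on $p/(2K)$ using the hypothesis $|A|>p^{3/4}K^{-1/4}$, is the main bookkeeping obstacle. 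I expect to have to redo the Parseval step in the form
$$\sum_r|\widehat{A^3}(r)||\widehat A(r)||\widehat{A+A}(r)|\leq \max_{r\neq 0}|\widehat{A^3}(r)|\cdot p\sqrt{|A||A+A|},$$
with $|\widehat{A^3}(r)|\leq\sqrt{p|A^3|}$, and to choose the equation
$$zy^{-1}+a=b,\qquad z\in A^3,$$
counted without multiplicity. The lower bound for this count is $S\geq |A|\cdot|A|\cdot|A|$: take $z=y\,a'$ with $a'\in A$, using $1\in A$ so that $A\subseteq A^3$ and $yA\subseteq A^3$ for $y\in A$.

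\textbf{Step 2: stabilisation.} Since $1\in A$, the sets $A\subseteq A^2\subseteq A^3\subseteq\cdots$ form an increasing chain. Let $m$ be the first index with $A^{m}=A^{m+1}$; then $A^{m}A\subseteq A^m$, so $A^{n}=A^m$ for all $n\geq m$. Hence $G:=A^m$ is closed under multiplication. Being a finite subset of $\F$, it is a subgroup, namely the subgroup generated by $A$.

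\textbf{Step 3: bounding $m$.} Before stabilisation each step increases the size by at least $1$, which is far too weak; we need jumps of size $\gg |A^3|$. We use Step 1 together with $|G|\mid p-1$. The set $A^3$ lies in $G$ and has size $>p/(2K)$, so $[\F:G]<2K$.

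Now apply a Kneser/Cauchy--Davenport type argument in the cyclic group $G$. For $B=A^3$ and $n\geq1$, either $B^{n+1}=G$, or $|B^{n+1}|\geq |B^n|+|B|-|H|$, where $H$ is the stabiliser of $B^{n+1}$. In the latter case $B^{n+1}$ is a union of $H$-cosets, so the sizes grow by at least $|B|-|H|$. If $H\neq G$ then $|H|\leq |G|/2$, and the growth is roughly $|B|$ at each step unless $|B|$ is too small compared with the stabiliser. With $|B|>p/(2K)\geq |G|/(2K)$, after at most $4K$ steps of $B$ we reach $G$. Since $B=A^3$, this gives $A^{3\cdot 4K}=A^{12K}=G$.

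The main risk is the stabiliser case of Kneser's theorem. I would handle it by passing to the quotient $G/H$, where the image of $B$ has density $>1/(2K)$, and iterating. If $A$ generates $\F$, then $G=\F$, and the final claim follows.
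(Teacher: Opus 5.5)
Your Step 1 carries all the analytic content of the theorem, and it does not establish $|A^3|>p/(2K)$. There are three concrete problems.

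(i) The bound $S\ge |A|^4$ is unjustified. Setting $x_1=y$ only leaves $x_2x_3+a=b$, and $x_2x_3+a$ need not lie in $A+A$. What is immediate, using $1\in A$, is $S\ge |A|\,E^\times_2(A)$.

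(ii) The displayed upper bound for $S$ is asserted, not derived, and it is not what the Fourier expansion gives. For the count with multiplicity the main term is exactly $|A|^5|A+A|/p$. Cauchy--Schwarz over $y$ bounds the nonzero frequencies by $|A|\sqrt{p\,|A+A|\,E^\times_3(A)}$, which is an energy, not $|A^3|\cdot|A|$.

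(iii) In the set version $zy^{-1}+a=b$ with $z\in A^3$, you cannot drop $y$. The $r$-th coefficient is $\sum_{y\in A}\widehat{A^3}(ry^{-1})$, not $\widehat{A^3}(r)$. In any case $|\widehat{A^3}(r)|\le\sqrt{p|A^3|}$ is weaker than the trivial bound $|A^3|$.

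Done correctly, with Cauchy--Schwarz over $y$ giving $\sqrt{p|A||A^3|}$, the set version yields
\[
|A|^3\le \frac{K|A|^3|A^3|}{p}+|A|^{3/2}\sqrt{pK|A^3|}.
\]
This implies $|A^3|>p/(2K)$ only once $|A|^3>2p^2$, for instance when $p\ge 16K^3$. The hypotheses force only $K^3<p$, because $K|A|=|A+A|\le p$. So on its own this route gives $|A^3|>\frac{3-\sqrt5}{2}\cdot\frac{p}{K}$ and a worse exponent than $12K$.

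The missing idea is to use (\ref{2}) as it stands, together with the trivial bound $E^\times_4(A)\le |A|^7$. Its second term is then at most $\sqrt{pK}\,|A|^4$ and its first term is $K|A|^6/p$. The hypothesis $K|A|^4>p^3$ is exactly the condition that the first term is larger. Hence $E^\times_3(A)<2K|A|^6/p$, and the Cauchy--Schwarz bound $|A^3|\ge |A|^6/E^\times_3(A)$ gives $|A^3|>p/(2K)$ with no loss.

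Step 2 is fine. Step 3 is incomplete: you flag the stabiliser case of Kneser's theorem and propose to pass to $G/H$ and iterate, but you do not carry this out. No quotient is needed if you keep $|BH|$ in Kneser's inequality instead of replacing it by $|B|$.

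Take $B=A^3$, and let $H$ be the stabiliser of $B^{n+1}\neq G$, so $H$ is a proper subgroup. Since $1\in B$ and $\langle B\rangle=G$, we have $B\not\subseteq H$, so $|BH|\ge 2|H|$. Then
\[
|B^{n+1}|\ge |B^nH|+|BH|-|H|\ge |B^n|+|B|/2 .
\]
Hence $B^j=G$ as soon as $(j+1)|B|/2\ge |G|$, and $|B|>p/(2K)>|G|/(2K)$ makes $j=4K$ enough. This is essentially the Olson-type lemma ($j>2|G|/|B|\Rightarrow B^j=G$) that the paper simply quotes.
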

Note that the last conclusion holds, for example, if $A$ is an interval $[0,N]\subseteq \F$, and $N\gg_Kp^{3/4}$. Indeed $|A+A|<2|A|$ and by a result of Burgess one can have  for every $k|p-1$ the least non-$k$ power $g_k(p)\ll p^{0.1516... }$ (see e.g. [R])

\begin{proof}
    Assume that $\frac{1}{p}|A|^5|A+A|\geq \sqrt{p|A+A|E^\times_4(A)}$, so $E^\times_3(A)\leq \frac{2}{p}|A|^5|A+A|$.

Since $|A|^4\ll E^\times_4(A)\ll |A|^7$ we get $p^3\leq |A+A|\cdot |A|^3=K|A|^4$. So we are in the region $|A|>\frac{p^{3/4}}{K^{1/4}}$. It is easy to see by the Cauchy inequality, that for any $X$ $|X^3|\geq \frac{|X|^6}{E^\times_3(X)}$ and so 
$$
|A^3|\geq \frac{|A|^6}{E^\times_3(A)}\geq \frac{p|A|^6}{2|A|^5|A+A|}=\frac{p}{2K}.
$$
Assume now that the set $A$ multiplicative generates $\F$.
A simple consequence of Olson (see [O] and [HH, Lemma 2.4]) we have
\begin{lemma}
Let $j\geq 2$ be an integer and $A$ be a generating subset of a finite group $G$ such that $1\in A$. If $j> 2|G|/|A|$, then $A^j=G.$
\end{lemma}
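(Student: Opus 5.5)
The plan is to combine a linear growth estimate for the powers $A^k$ with a pigeonhole argument once $A^k$ covers more than half of $G$. Since $1\in A$, the powers form an increasing chain $A\subseteq A^2\subseteq A^3\subseteq\cdots$, because $A^{k+1}=A^k\cdot A\supseteq A^k\cdot 1$. So it suffices to find a single $m\le j$ with $A^m=G$, and then $A^j\supseteq A^m=G$.

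\emph{Step 1 (no stabilisation before $G$).} If $A^kA=A^k$ for some $k$, then $A^k$ is invariant under right multiplication by every element of $A$. In a finite group, right multiplication by $a$ is a bijection, so $A^k$ is also invariant under $a^{-1}$. Hence $A^k$ is invariant under the whole group generated by $A$, which is $G$, and therefore $A^k=G$. Thus the chain strictly increases until it reaches $G$. This alone gives only growth by $1$ per step, so I need a quantitative version.

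\emph{Step 2 (quantitative growth, the main point).} Here I would invoke Olson's theorem [O] in the form used in [HH, Lemma 2.4]. For $S\ni 1$ generating $G$ and any nonempty $B\subseteq G$,
$$|BS|\ \ge\ \min\Big(|G|,\ |B|+\tfrac12|S|\Big).$$
Applying this with $B=A^{k}$ and $S=A$, and inducting on $k$, gives
$$|A^k|\ \ge\ \min\Big(|G|,\ \tfrac{k+1}{2}|A|\Big)\qquad (k\ge1).$$
This is the step I expect to be the real obstacle, since it is exactly Olson's nontrivial combinatorial input. If I had to prove it rather than cite it, I would attempt a Kneser/Dyson $e$-transform argument. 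That argument compares $BS$ with the stabiliser-type subsets $\{g: gS\subseteq BS\}$ and uses that $S$ is generating to rule out small periodic obstructions. Within this paper, I would simply cite [O].

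\emph{Step 3 (pigeonhole).} Let $k=\lfloor j/2\rfloor\ge1$. Then $k+1>j/2>|G|/|A|$, so Step 2 gives $|A^k|>|G|/2$, unless already $A^k=G$. For any $g\in G$, the sets $A^k$ and $g(A^k)^{-1}$ each have more than $|G|/2$ elements, so they intersect. This means $g=xy$ with $x,y\in A^k$, hence $A^{2k}=G$. Since $2k\le j$, monotonicity of the chain yields $A^j=G$. The hypothesis $j\ge2$ only ensures $k\ge1$.
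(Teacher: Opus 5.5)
Your argument is correct and follows the paper's route: the paper gives no proof beyond calling the lemma a simple consequence of Olson's growth bound (via [HH, Lemma 2.4]), and you derive it from that bound by induction, which is what is intended. Your Step 3 is not needed if you use Olson in the form $|BS|\ge\min(|G|,|B|+\frac12|S|)$, since then $|A^j|\ge\min\bigl(|G|,\frac{j+1}{2}|A|\bigr)$ and the hypothesis gives $\frac{j+1}{2}|A|>|G|$, which already forces $A^j=G$; the pigeonhole step is harmless, though, and would still work with a version of Olson's bound that only holds while $|A^k|\le |G|/2$.
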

Using this lemma, we obtain $(A^3)^{4K}=A^{12K}=G$ as we want.
\end{proof}

\medskip

2. If we start from the generalized Elekes-Garaev equation $x_1x_2\cdots x_ky_1^{-1}y_2^{-1}\cdots y_{k-1}^{-1} +a=b$, $x_i,y_j\in A$, $i=1,2,k; \ j=1,2, \dots k-1$, $a\in A$, $b\in A+A$ then someone can conclude similar estimation between $E^\times_k(A)$ and $E^\times_l(A)$, ($k<l$) using an iteration.

\medskip

{\bf Acknowledgment} The author is supported by the National Research, Development and Innovation Office NKFIH Grant No K-146387.

\end{document}